\newtheorem{thm}{Theorem}[section]
\newtheorem{lem}[thm]{Lemma}
\theoremstyle{definition}
\theoremstyle{remark}
\newtheorem{rem}[thm]{Remark}
\numberwithin{equation}{section}
\newcommand{\Real}{\mathbb R}
\newcommand{\eps}{\varepsilon}
\newcommand{\F}{\mathcal{F}}
\newcommand{\one}[1]{\mathbf{1}_{\{#1\}}}
\renewcommand{\P}{\mathbb{P}}
\newcommand{\E}{\mathbb{E}}
\DeclareMathOperator{\trace}{tr}
\DeclareMathOperator*{\sign}{sign}
\def\XXint#1#2#3{{\setbox0=\hbox{$#1{#2#3}{\int}$}
     \vcenter{\hbox{$#2#3$}}\kern-.5\wd0}}
\begin{document}

\title[Statistical analysis of the mixed fractional OU process]
{Statistical analysis of the mixed fractional Ornstein--Uhlenbeck process}

\author{P. Chigansky}%
\address{Department of Statistics,
The Hebrew University,
Mount Scopus, Jerusalem 91905,
Israel}
\email{pchiga@mscc.huji.ac.il}

\author{M. Kleptsyna}%
\address{Laboratoire de Statistique et Processus,
Universite du Maine,
France}
\email{marina.kleptsyna@univ-lemans.fr}

\thanks{P. Chigansky is supported by ISF 558/13 grant}
\keywords{
Maximum Likelihood estimator, 
Ornstein--Uhlenbeck process,
fractional Brownian motion,
singularly perturbed integral equations, 
weakly singular integral operators
}%

\date{\today}%
\begin{abstract}

This paper addresses the problem of estimating drift parameter of the Ornstein - Uhlenbeck type process,
driven by the sum of independent standard and fractional Brownian motions. The maximum likelihood estimator 
is shown to be consistent and asymptotically normal in the large-sample limit, using some recent results on 
the canonical representation and spectral structure of mixed processes.

\end{abstract}

\maketitle


\section{Introduction and the main result}

\subsection{Drift estimation problem}
Estimating drift parameter $\theta\in \Real$ from a sample path of the Ornstein--Uhlenbeck type process $X^T=(X_t, t\in [0,T])$:
\begin{equation}
\label{X}
X_t = X_0 + \theta \int_0^t X_s ds + V_t, \quad t\ge 0
\end{equation}
is a prototypical problem in statistical inference of random processes. 
It can be approached in a number of ways, which produce reasonable  
estimators, see e.g. \cite{HN10}, \cite{SV17}. However, without being based on the likelihood function, these 
estimators are asymptotically subefficient as $T\to\infty$, at best up to a finite gap with respect to the information bound. 
Construction and analysis of the likelihood based estimators, on the other hand, requires a convenient formula 
for the likelihood function, which can be hard to find for a given driving process $V$.

In its classic form, with \eqref{X} driven by the standard Brownian motion $B=(B_t, t>0)$, this problem was
extensively studied since 60's. In this case probability measures $\mu^T_\theta$, $\theta\in \Real$ induced by $X^T$ 
are equivalent and the likelihood function is given by the Girsanov exponent:
$$
\frac{d\mu^T_\theta}{d\mu^T_0}(X^T) = \exp \left(\theta \int_0^T X_tdX_t-\frac  1 2 \theta^2 \int_0^T X_t^2 dt \right).
$$
Consequently, the maximum likelihood estimator (m.l.e.) of $\theta$, that is,  the unique maximizer of the likelihood, 
is given by the simple formula
$$
\widehat \theta_T  = \frac{\int_0^T X_t dX_t}{\int_0^T X_t^2 dt}, \quad T>0.
$$
It is asymptotically optimal in the local minimax sense as $T\to\infty$ and 
its limit behavior is determined by the sign of the 
drift parameter $\theta$. In the stable case, for $\theta<0$, the estimation error $\widehat \theta_T-\theta$ 
is asymptotically normal at the usual parametric rate $\sqrt {T}$: 
\begin{equation}\label{LAN}
\sqrt{T} (\widehat \theta_T-\theta)\xrightarrow[T\to\infty]{d}N(0, 2|\theta|), \quad \forall \theta <0,
\end{equation}
where the convergence is in distribution. Entirely different asymptotics emerges in the neutrally stable
and unstable cases, $\theta=0$ and $\theta>0$ respectively.  A comprehensive account of these and other results 
can be found in \cite{Ku04}.

\subsection{Innovation approach}
The likelihood function can be constructed by means of the Girsanov theorem, if the driving 
process $V$ is Gaussian and admits canonical innovation representation (see, e.g., \cite{HH76}), that is,  
if there exists a pair of deterministic kernels $g(s,t)$ and $\widetilde g(s,t)$, such that 
\begin{equation}\label{cir}
V_t = \int_0^t \widetilde g(s,t) dM_s\quad \text{and}\quad  \quad M_t = \int_0^t g(s,t) dV_s, \quad t\ge 0,
\end{equation}
where $M=(M_t, t\ge 0)$ is a continuous martingale with a strictly increasing quadratic variation 
$\langle M\rangle_t$.  The stochastic integrals in \eqref{cir} are defined in some reasonable sense, e.g., 
through  approximation by simple functions. 

Integrating kernel $g(s,t)$ with respect to $X$ gives a semimartingale
\begin{equation}
\label{Zt}
Z_t := \int_0^t g(s,t)dX_s = \theta \int_0^t Q_s(X) d\langle M\rangle_s + M_t
\end{equation}
where 
\begin{equation}\label{Qt}
Q_t(X):=\frac{d}{d\langle M\rangle_t}\int_0^t g(s,t) X_sds.
\end{equation}
and if the filtrations generated by $X$ and $Z$ coincide, then by the Girsanov theorem the measures $\mu^T_\theta$, $\theta \in \Real$ 
are equivalent with the likelihood function of the form 
$$
\frac{d\mu^T_\theta}{d\mu^T_0}(X^T) = \exp \left(\theta \int_0^T Q_t(X)dZ_t-\frac  1 2 \theta^2 \int_0^T Q_t(X)^2 
d\langle M\rangle_t \right).
$$
The m.l.e. is then given by 
\begin{equation}\label{m.l.e.Q}
\widehat \theta_T := \frac{\int_0^T Q_t(X)dZ_t}{\int_0^T Q_t(X)^2 d\langle M\rangle_t}.
\end{equation}

Implementation of this approach however often entails some difficulties.  Firstly, canonical representation \eqref{cir}
may not exist or can be hard to find in a suitable form for a given process $V$. 
Moreover, even if such representation is available, it may not readily reveal 
meaningful information about the estimation error
\begin{equation}
\label{err}
\widehat \theta_T -\theta = \frac{\int_0^T Q_t(X)dM_t}{\int_0^T Q_t(X)^2 d\langle M\rangle_t}.
\end{equation}
Consequently likelihood based estimators have been studied only for a few processes beyond the standard 
Brownian framework.

The innovation approach was realized in \cite{KLeB02} for the Ornstein--Uhlenbeck type process \eqref{X}, 
driven by the fractional Brownian  motion (f.B.m) $B^H=(B^H_t, t\ge 0)$, that is, the centered Gaussian process with covariance function 
\begin{equation}
\label{EBB}
\E B^H_t B^H_s = \frac 1 2\left(t^{2H}+s^{2H}-|t-s|^{2H}\right), \quad s,t\ge 0,
\end{equation}
where $H\in (0,1)$ is the Hurst parameter. For  $H=\frac 1 2$ the f.B.m. coincides with the standard Brownian motion,
but otherwise has different properties, see e.g. \cite{EM02}, \cite{M08}, \cite{BHOZ}.
In particular, for $H>\frac 1 2$ its increments exhibit long-range dependence, which makes f.B.m.  important in 
modeling (see, e.g., \cite{PT17}). The m.l.e. in \cite{KLeB02} is shown to satisfy asymptotics \eqref{LAN}, 
using the canonical representation of the f.B.m., also known in the literature as the Molchan-Golosov transformation 
(see also \cite{NVV99}, \cite{J06}).  

\subsection{The main result}

In this paper we revisit drift estimation problem for the {\em mixed fractional} Ornstein--Uhlenbeck process \eqref{X} driven by
\begin{equation}\label{VBB}
V_t=B_t+B^H_t, \quad t\ge 0
\end{equation}
where $B$ and $B^H$, $H\in (0,1)$ are independent standard and fractional Brownian motions.
Interest in this particular mixture has been triggered by paper \cite{Ch01}, which revealed a number of its curious properties, 
relevant to mathematical finance, see \cite{Ch03}, \cite{BSV07};  some further related results appeared in
\cite{Ch03b}, \cite{BN03}, \cite{vZ07}, \cite{CCK}, \cite{DMS14}.

We will use the canonical representation suggested in \cite{CCK},  based on the martingale 
\begin{equation}
\label{M}M_t := \E(B_t|\F^V_t).
\end{equation}
To this end consider the integro--differential Wiener-Hopf type equation:
\begin{equation}
\label{WHeq}
g(s,t) + \frac {d}{ds} \int_0^t g(r,t) H |s-r|^{2H-1} \sign(s-r)dr = 1, \quad 0<s\ne t \le T.
\end{equation}
By Theorem 5.1 in \cite{CCK} this equation has unique solution for any $H\in (0, 1)$. It is 
continuous on $[0,T]$, and the martingale, defined in \eqref{M}, satisfies 
\begin{equation}\label{martingale}
M_t = \int_0^t g(s,t)dV_t\quad \text{and}\quad \langle M\rangle_t = \int_0^t g(s,t)ds, \quad t\in [0,T],
\end{equation} 
where the stochastic integral is defined for $L^2(0,T)$ deterministic integrands in the usual way (see, e.g., \cite{LS1}). 
By Corollary 2.9 in \cite{CCK}, process $V$ admits canonical representation \eqref{cir}  with 
\begin{equation}
\label{tildeg}
\widetilde g(s,t) := 1- \frac{d}{d\langle M\rangle_s}\int_0^t g(r,s)dr,
\end{equation}
and the m.l.e. of $\theta$ is given by \eqref{m.l.e.Q}.

\medskip
The main result of this paper is the proof of asymptotic normality of the m.l.e.:

\begin{thm}\label{thm}
Let $X=(X_t, t\ge 0)$ be the stable Ornstein--Uhlenbeck process, generated by equation \eqref{X} with drift parameter $\theta<0$ and 
driving process $V=(V_t, t\ge 0)$ defined in \eqref{VBB}. The maximum likelihood estimator 
\eqref{m.l.e.Q} is asymptotically normal with limit \eqref{LAN}, where all moments converge.  

\end{thm}

\section{Proof of Theorem \ref{thm}}

We will first derive the weak limit \eqref{LAN} in Section \ref{sec:2.1} and then prove convergence of moments in Section \ref{sec:2.2}
by the uniform  integrability argument. 

\subsection{Convergence in distribution}\label{sec:2.1}
The proof is inspired by the approach in \cite{KLeB02}.   
In view of \eqref{err}, convergence in distribution \eqref{LAN} follows from (Theorem 1.19 in \cite{Ku04})
$$
\frac 1 T \int_0^T Q_t^2 d\langle M\rangle_t\xrightarrow[T\to\infty]{\P} \frac 1{2 |\theta| }.
$$
We will derive this limit by proving convergence of the Laplace transform
\begin{equation}
\label{enough}
\mathcal{L}_T(\mu):=\E \exp \left(-\mu \frac 1 T \int_0^T Q_t^2 d\langle M\rangle_t\right) \xrightarrow{T\to\infty} 
\exp\left( -\frac\mu{2 |\theta| } \right),
\quad \mu \in \Real.
\end{equation}
It will become clear from the proof, that for any $\mu\in \Real$  the Laplace transform is well defined $\mathcal{L}_T(\mu)<\infty$ 
for all sufficiently large $T$. 

The main difficulty in implementing the approach from \cite{KLeB02} in our setup is the lack of explicit expressions for 
kernels $g(s,t)$ and $\widetilde g(s,t)$.
We will show that the large sample asymptotics of the m.l.e.  is governed in this case 
by a certain singularly perturbed version of integro--differnetial equation \eqref{WHeq}. Asymptotic analysis of this equation 
is carried out in our paper, using approximations of the eigenvalues and eigenfunctions for 
the fractional Brownian noise, obtained recently in \cite{ChK}.

\medskip

The proof is split into several lemmas. The first step is to show that process $Q_t$
admits representation as the stochastic integral with respect to auxiliary observation process $Z_t$ defined \eqref{Zt},  
whose integrand is controlled by derivative of the martingale bracket $d\langle M\rangle_t/dt$. 
This derivative exists and is continuous by Theorem 2.4 in \cite{CCK}.

\begin{lem}\label{lem.l.e.m} Let $Q=(Q_t,t\ge 0)$ be the process defined in \eqref{Qt}, with $X=(X_t, t\ge 0)$ 
being the solution of stochastic equation \eqref{X}, driven by $V=(V_t, t\ge 0)$ from \eqref{VBB}. Then 
$$
Q_t = \int_0^t \psi(s,t) dZ_s,
$$
where
\begin{equation}
\label{psist}
\psi(s,t) = \frac 1 2 \left(\frac{dt}{d\langle M\rangle_t}+\frac{ds}{d\langle M\rangle_s}\right).
\end{equation}
\end{lem}

\begin{proof}
By Corollary 2.9 in \cite{CCK}, $\F^X_t = \F^Z_t$ and
$
\displaystyle X_t = \int_0^t \widetilde g(s,t)dZ_s
$
with
$
\widetilde g(s,t)
$
given in \eqref{tildeg}. 
Consequently,
\begin{align*}
Q_t = &
\frac{d}{d\langle M\rangle_t}\int_0^t g(s,t) X_s ds = \frac{d}{d\langle M\rangle_t}\int_0^t g(s,t)\int_0^s \widetilde g(r,s)dZ_r ds  \\
= & \frac{d}{d\langle M\rangle_t}\int_0^t \left(\int_r^t   g(s,t) \widetilde g(r,s) ds\right) dZ_r \stackrel{\dagger}{=}
\int_0^t \psi(r,t) dZ_r,
\end{align*}
with
$$
\psi(r,t) := \frac{d}{d\langle M\rangle_t} \left(\int_r^t   g(s,t) \widetilde g(r,s) ds\right),
$$
where the equality $\dagger$ holds, since the integrand vanishes at $r=t$.
Note that $\psi(s,t)$ does not depend on $\theta$ and hence we can assume  $\theta=0$ for the rest of the proof. 
Then  
$$
Q_t = \int_0^t \psi(r,t)dM_r
\quad
\text{and}
\quad
\E Q_tM_s = \int_0^s \psi(r,t) d\langle M\rangle_r, \quad s\le t
$$
and consequently 
$$
\psi(s,t) = \frac{\partial}{\partial\langle M\rangle_s}\E  Q_t M_s=
\frac{\partial }{\partial\langle M\rangle_s  }\frac{\partial }{  \partial\langle M\rangle_t} \E  \int_0^t g(r,t) V_r  dr
\int_0^s g(r,s)dV_r.
$$
Set  $v(t):= \frac 1 2\E (B^H_t)^2= \frac 1 2 t^{2H}$, so that the covariance in \eqref{EBB} reads
$$
\E B^H_t B^H_s =  v(t)+  v(s)-  v(|t-s|).
$$
Then we have
\begin{align*}
&
\E  \int_0^t g(\tau,t) V_\tau d\tau \int_0^s g(r,s)dV_r=
\E  \int_0^t \int_0^s g(\tau,t) g(r,s) \frac{\partial}{\partial r} \E V_\tau  V_r dr d\tau =\\
&
\int_0^s \int_0^t g(r,s) g(\tau,t) \frac{\partial}{\partial r}\Big(\tau \wedge r +   v(t)+   v(r)
-  v(|r-\tau|)\Big)  d\tau dr =\\
&
\int_0^s \int_0^t g(r,s) g(\tau,t) \Big(\one{r\le \tau} +   v'(r)- v'(|r-\tau|)\mathrm{sign}(r-\tau)\Big)  d\tau dr =\\
&
\int_0^s g(r,s) \phi(r,t) dr  +\int_0^s g(r,s)v'(r)dr\int_0^t  g(\tau,t) d\tau, 
\end{align*}
where we defined 
$$
\phi(r,t):=  \int_r^t  g(\tau,t)  d\tau -
\int_0^t  g(\tau,t)v'(|r-\tau|)\mathrm{sign}(r-\tau)  d\tau.
$$
Since $g(s,t)$ solves \eqref{WHeq}, we have  
$$
\frac{\partial}{\partial r}\phi(r,t)=
-  g(r,t)
-
\frac{\partial}{\partial r}\int_0^t  g(\tau,t)v'(|r-\tau|)\mathrm{sign}(r-\tau)  d\tau =-1
$$
and, integrating, 
$$
\phi(r,t) = \phi(0,t)-r=
\int_0^t  g(\tau,t)  d\tau +
\int_0^t  g(\tau,t)v'(\tau)   d\tau -r 
=: \langle M \rangle_t+\langle N\rangle_t-r.
$$

Gathering all parts together, we obtain
\begin{equation}\label{plugpsi}
\begin{aligned}
\psi(s,t) = & 
\frac{\partial }{\partial\langle M\rangle_s  }\frac{\partial }{  \partial\langle M\rangle_t} \left(
\int_0^s g(r,s) \big(\langle M \rangle_t+\langle N\rangle_t-r\big) dr  +\langle N\rangle_s \langle M\rangle_t
\right) =\\
&
\frac{\partial }{\partial\langle M\rangle_s  }\frac{\partial }{  \partial\langle M\rangle_t}\big(\langle M \rangle_t+\langle N\rangle_t \big)
\langle M\rangle_s   + \frac{d\langle N\rangle_s }{d \langle M\rangle_s  } =
1 +\frac{d\langle N\rangle_t  }{  d\langle M\rangle_t} +\frac{d\langle N\rangle_s }{d \langle M\rangle_s  }.
\end{aligned}
\end{equation}

Now integrate equation \eqref{WHeq} to get 
$$
\int_0^t g(s,t)ds + \int_0^t \frac {d}{ds} \int_0^t g(r,t) v'(|s-r|)  \sign(s-r)dr ds= t,
$$
or, equivalently,
$
\langle M \rangle_t +2  \langle N \rangle_t =t,
$
where we used the symmetry $g(s,t)=g(t-s,t)$.
Hence 
$$
1 +2  \frac {d\langle N \rangle_t}{d \langle M \rangle_t} =\frac{dt}{d\langle M \rangle_t} 
$$
and plugging this expression into \eqref{plugpsi} gives \eqref{psist}.

\end{proof}

Using the special structure of process $Q_t$, derived in  Lemma \ref{lem.l.e.m},  the
Laplace transform \eqref{enough} can be expressed in terms of solution to an auxiliary 
Riccati differential equation:
\begin{lem}
Consider the Riccati equation  
\begin{equation}\label{Riceq}
\dot \Gamma(t) =  \frac { \theta } 2 A(t) \Gamma(t) +\frac { \theta } 2\Gamma(t)A(t)^\top+B(t) -\frac \mu {2T} \Gamma(t) R(t)\Gamma(t), \quad t\in [0,T],
\end{equation}
subject to the initial condition $\Gamma(0)=0$, where 
\begin{equation}\label{ABR}
\begin{aligned}
&
A(t)=
\begin{pmatrix}
1 & \dfrac 1 {\psi(t,t)} \\
\psi(t,t) & 1
\end{pmatrix}   \quad 
B(t)=
\begin{pmatrix}
\dfrac 1 {\psi(t,t)} &  1\\
1 & \psi(t,t)
\end{pmatrix} \\
&  
R(t) = \begin{pmatrix}
  \psi(t,t)  & 1\\
1 & \dfrac 1{ \psi(t,t) }
\end{pmatrix},
\end{aligned}
\end{equation}
with $\psi(t,t)$ defined in Lemma \ref{lem.l.e.m}.
Let $\mu\in \Real$ be such that \eqref{Riceq} has a continuous solution on $[0,T]$, then 
\begin{equation}
\label{Laplace}
\mathcal{L}_T(\mu) =\exp \left(-  \frac \mu {4T}\int_0^T \trace\big(\Gamma(s) R(s)\big)ds\right)<\infty. 
\end{equation}
\end{lem}

\begin{rem}
The Riccati equation \eqref{Riceq} is well known to have unique continuous solution on {\em any} interval $[0,T]$ for all $\mu\ge 0$
and in this case the expression \eqref{Laplace} follows from the Cameron-Martin formula, see Section 4.1 in \cite{KLeB01}.
We will give a proof, which does not require $\mu$ to be positive. This is essential for convergence of moments in \eqref{LAN}, 
as explained in Section \ref{sec:2.2} below. 
In general Riccati equations with positive definite quadratic term, corresponding to $\mu<0$ in our case, can be guaranteed 
to have only local solution, which can explode in finite time. Global solvability of \eqref{Riceq} for any $\mu\in \Real$ 
is proved in Lemma \ref{lem-cond} below. 
\end{rem}

\begin{proof}
By Lemma \ref{lem.l.e.m}
$$
Q_t = \int_0^t \psi(r,t) dZ_r = \frac1 2  \psi(t,t)Z_t +
\frac 1 2 \int_0^t  \psi(r,r)  dZ_r.
$$
Let $\displaystyle Y_t = \int_0^t  \psi(r,r)  dZ_r$, then
\begin{align*}
dZ_t =\,
&
\theta   Q_t  d\langle M\rangle_t + dM_t =
\frac \theta 2   \psi(t,t)Z_t d\langle M\rangle_t +
\frac \theta 2     Y_t d\langle M\rangle_t + dM_t =\\
&
\frac  \theta  2   Z_t dt +
\frac  \theta  2  Y_t \frac 1{\psi(t,t)}dt + \frac 1{\sqrt{\psi(t,t)}}dW_t,
\end{align*}
where we used \eqref{psist} and defined the standard Brownian motion $W_t=\int_0^t \sqrt{\psi(s,s)}dM_s$.
Similarly,
\begin{align*}
dY_t =
&
  \frac \theta 2  \psi(t,t)^2Z_td\langle M\rangle_t    +\frac \theta 2   \psi(t,t) Y_td\langle M\rangle_t
 + \psi(t,t) dM_t =\\
&
\frac \theta 2 \psi (t,t)Z_t dt    +  \frac \theta 2 Y_tdt
 + \sqrt{\psi(t,t)} dW_t.
\end{align*}
Hence the vector $\xi_t = (Z_t, Y_t)^\top$ solves the linear system of It\^o stochastic differential equations
\begin{equation}\label{zetaeq}
d\xi_t = \frac {\theta} 2 A(t)\xi_t dt + b(t) dW_t
\end{equation}
with $A(t)$ defined in \eqref{ABR} and 
$
b(t)^\top =
\begin{pmatrix}
 \dfrac 1 {\sqrt{ \psi(t,t)}} ,
 \sqrt{\psi(t,t)}
\end{pmatrix}.
$
The Laplace transform \eqref{enough} satisfies
\begin{align*}
&
\mathcal{L}_T(\mu)=\E  \exp\left(-   \frac \mu T \int_0^T Q^2_{t} d\langle M \rangle_t\right)=
\E  \exp\left(-   \frac \mu {4T}\int_0^T \big(\psi(t,t)Z_t+Y_t\big)^2 d\langle M \rangle_t\right)  =\\
&
\E  \exp\left(-\frac \mu {4T}\int_0^T \Big(\sqrt{\psi(t,t)}Z_t+\frac 1{\sqrt{\psi(t,t)}}Y_t\Big)^2  dt\right)  = 
\E  \exp\left(-\frac {\tilde\mu} {2}\int_0^T  \big(q(t)^\top\xi_t\big)^2      dt\right),
\end{align*}
where we defined $\tilde \mu :=\mu/(2T)$ and 
$
q(t)^\top := \begin{pmatrix}
\sqrt{\psi(t,t)} ,
\dfrac 1 {\sqrt{ \psi(t,t)}}
\end{pmatrix}.
$

The process $\big(q(t)^\top\xi_t, t\in [0,T]\big)$ is Gaussian with zero mean and 
continuous covariance function $K(t,s)=q(t)^\top \E\xi_t\xi_s^\top q(s)$. 
The eigenvalues $\lambda_1(T)\ge \lambda_2(T)\ge ...$ of the corresponding covariance operator are nonnegative and converge to zero and
\begin{equation}\label{Ltildemu}
\mathcal{L}_T(\mu) = \prod_{j=1}^\infty \frac1 {\sqrt{1+\tilde \mu\lambda_j(T)}} = \frac 1{ \sqrt{D(\tilde \mu)}},\quad \tilde \mu>- 1/\lambda_1(T)
\end{equation}
where $D(\tilde \mu)$ is the Fredholm determinant of $K$ (here and below we use the same notation for integral operators and their kernels). 
By Proposition  IV.7.2$^\circ$ in \cite{GohbergKrein70} 
operator $K$ admits the factorization
\begin{equation}\label{KVV}
(I+\tilde\mu K) = (I+V_+)(I+V_-) 
\end{equation}
where $V_+$ and $V_-$ are left and right Volterra operators. Their kernels $V_+(t,s)$ and $V_-(t,s)$ vanish
for $s<t$ and $t<s$ respectively, are continuous on the complementary triangles and coincide on the diagonal. 
Hence the operator $V_++V_-$ has continuous kernel 
$$
V(t,s) = \begin{cases}
V_+(t,s) & t\ge s\\
V_-(t,s) & t<s
\end{cases}
$$
and is therefore trace class. By an identity due to Krein (see Theorem on page 232 in \cite{Bal73}): 
\begin{equation}\label{Dtildemu}
\log D(\tilde \mu) = \trace (V_++V_-) = \int_0^T V(s,s)ds.
\end{equation}

Since kernel $K(s,t)$ is symmetric around diagonal, so is $V(s,t)$ and after a change of variables, factorization \eqref{KVV} reduces to 
the Riccati-Volterra equation (see (7.5) in \cite{GohbergKrein70}):
\begin{equation}\label{RV}
\tilde \mu K(t,s) =  V(t,s) + \int_0^s V(t, r)V(s,r)dr, \quad s<t.
\end{equation}
In our case it can be solved using differential Riccati equation \eqref{Riceq} as follows. 
Since $\xi_t$ solves linear stochastic equation \eqref{zetaeq},
\begin{equation}\label{qFPq}
K(t,s) = q(t)^\top F(t,s)P(s)q(s) 
\end{equation}
where  $F(t,s)$ is the fundamental solution of the equation $\dot x_t  = \frac \theta 2 A(t) x_t$ and $P(s)$ solves the Lyapunov 
differential equation 
\begin{equation}
\label{Lyapeq}
\dot P(t)  = \frac \theta 2 A(t) P(t) + \frac \theta 2 P(t)^\top A(t)^\top + b(t)b(t)^\top, \quad t>0
\end{equation}
subject to $P(0)=0$. 
In view of \eqref{qFPq}, it makes sense to look for solutions of the Riccati-Volterra equation \eqref{RV} in the form
\begin{equation}\label{Vts}
V(t,s)=\tilde \mu\, q(t)^\top F(t,s)\Gamma(s)q(s).
\end{equation}
Let us show that this function indeed solves \eqref{RV}, if $\Gamma(t)$ is a continuous solution of \eqref{Riceq}.
To this end, we have 
$$ 
\tilde \mu K(t,s) - V(t,s) - \int_0^s V(t, r)V(s,r)dr =
\tilde \mu q(t)^\top F(t,s)\Delta(s)
q(s) 
$$
where 
$$
\Delta(s):=
P(s)-  \Gamma(s)  -   \tilde \mu\int_0^s   F(s,r)\Gamma(r)q(r) 
q(r)^\top \Gamma(r)^\top F(s,r)^\top  
dr. 
$$
In view of  \eqref{Lyapeq} and \eqref{Riceq} this function satisfies the linear equation
$$
\dot \Delta(s) = 
\frac \theta 2 A(s)\Delta(s)    +\Delta(s)\frac \theta 2 A(s)^\top, \quad s\ge 0
$$
subject to $\Delta(0)=0$, which implies $\Delta(s)\equiv 0$ for all $s\ge 0$ by uniqueness of the solution. 
Hence $V(t,s)$ in \eqref{Vts} solves \eqref{RV} and  
plugging it into \eqref{Dtildemu} and \eqref{Ltildemu} and setting $\tilde \mu = \mu/(2T)$ we obtain formula \eqref{Laplace}. 
   
\end{proof}
The next lemma establishes solvability of the Riccati equation \eqref{Riceq} and formulates sufficient conditions for asymptotic 
normality of the m.l.e. in terms of the innovating martingale bracket:

\begin{lem}\label{lem-cond}
Assume that the quadratic variation of the martingale in \eqref{martingale} satisfies the growth conditions 
\begin{equation}\label{ass}
\int_0^\infty \left(\frac d {dt} \log \frac{d}{dt}\langle M \rangle_t \right)^2dt <\infty
\end{equation}
and
\begin{equation}
\label{check2}
\frac 1 t \max \left( \frac{dt}{d\langle M\rangle_t},\frac{d\langle M\rangle_t}{dt}\right)\xrightarrow{t\to\infty}0.
\end{equation}
Then the following assertions hold:

\medskip 

\begin{enumerate}
\addtolength{\itemsep}{0.7\baselineskip}
\renewcommand{\theenumi}{\alph{enumi}}

\item for any $\mu\in \Real$ the Riccati equation \eqref{Riceq} has unique continuous solution on $[0,T]$ for all sufficiently large 
$T$

\item the Laplace transform converges to the limit \eqref{enough} and

\item\label{lem-cond:c} satisfies the bound 
\begin{equation}\label{muexpbnd}
\mathcal{L}_T(\mu)\le c_1 \mu^2 \exp \Big(-c_2\sqrt{\mu} \Big),\quad \forall \mu> 0, 
\end{equation} 
where $c_1$ and $c_2$ are positive constants, independent of $T$.
\end{enumerate}

\end{lem}

\begin{proof}\

\medskip
\noindent 
(a) 
Fix any $\mu\in \Real$ and let  $\Phi_1(t)$ and $\Phi_2(t)$ be the solutions of the linear system: 
\begin{equation}
\label{PhiPhi}
\begin{aligned}
& \dot\Phi_1(t)  = -\frac { \theta } 2 \Phi_1(t) A(t) + \frac{\mu}{2T}\Phi_2(t) R(t)\\
& \dot\Phi_2(t) = \phantom{+} \Phi_1(t) B(t) +\frac { \theta } 2 \Phi_2(t) A(t)^\top
\end{aligned}
\end{equation}
subject to $\Phi_1(0)=I$ and $\Phi_2(0)=0$. By continuity $\Phi_1(t)$ remains nonsingular on a vicinity of the origin and  
the direct calculation shows that $\Gamma(t)=\Phi^{-1}_1(t)\Phi_2(t)$ solves \eqref{Riceq}. 
We will argue that $\Phi_1(t)$ in fact remains nonsingular on the interval $[0,T]$, provided $T$ is chosen large enough 
and thus $\Gamma(t)$ is a global solution for all such $T$.  

To this end let
$
J = \left(\begin{smallmatrix}
0 & 1 \\
1 & 0
\end{smallmatrix}
\right)
$
and note that $R(t) = J A(t)$, $B(t) = A(t)J$ and $J A(t) J=A(t)^\top$.
If we now define  $\widetilde \Phi_2(t):= \Phi_2(t)J$ and multiply the second equation in \eqref{PhiPhi} by $J$ from the right,  we obtain the system
\begin{equation}
\label{sys12}
\begin{aligned}
& \dot\Phi_1(t)  = -\frac { \theta } 2 \Phi_1(t) A(t) + \frac{\mu}{2T}\widetilde \Phi_2(t)  A(t)\\
& \dot{\widetilde \Phi}_2(t) = \phantom{+} \Phi_1(t) A(t) +\frac { \theta } 2 \widetilde \Phi_2(t)   A(t) 
\end{aligned}
\end{equation}
subject to $\Phi_1(0)=I$ and $\Phi_2(0)=0$.
Let $T$ be large enough so that $\left(\frac {\theta }2\right)^2+\frac {\mu } {2T}>0$,
then matrix
$
\left(
\begin{smallmatrix}
-\frac { \theta } 2 & \frac \mu {2T} \\
1 &  \frac { \theta } 2
\end{smallmatrix} 
\right)
$
has two real eigenvalues $\pm \gamma_T$ with $\gamma_T = \sqrt{\left(\frac {\theta }2\right)^2+\frac {\mu } {2T}}$ and the corresponding eigenvectors
$$
v_+ = \begin{pmatrix}
a^+_T \\
1
\end{pmatrix} \quad\text{and} \quad 
v_- = \begin{pmatrix}
a^-_T \\
1
\end{pmatrix},
$$
where $a^{\pm}_T=-\frac {\theta} 2 \pm \gamma_T$.
Diagonalizing \eqref{sys12} we obtain 
$$
\Phi_1(t) = a^+_T \Upsilon_1(t) + a^-_T\Upsilon_2(t),
$$
where $\Upsilon_1(t)$ and $\Upsilon_2(t)$ solve decoupled equations 
\begin{equation}
\label{Upsilon}
\begin{aligned}
&\dot \Upsilon_1(t) =\;\;\, \gamma_T\Upsilon_1(t) A(t) \\
&\dot \Upsilon_2(t) = -\gamma_T \Upsilon_2(t) A(t)
\end{aligned}
\end{equation}
subject to $\Upsilon_1(0) =  -\Upsilon_2(0)=  I/(2\gamma_T)$.
Hence 
\begin{equation}
\label{logdet}
\begin{aligned}
\log \det \big(\Phi_1(t)\big)  = &  \log \det \big(a^+_T \Upsilon_1(t) + a^-_T\Upsilon_2(t)\big) =  \\
&
\log \det \big(a^+_T \Upsilon_1(t)\big) +\log \det\Big(I + \frac{a^-_T}{a^+_T}\Upsilon_1^{-1}(t)\Upsilon_2(t)\Big),
\end{aligned}
\end{equation}
where inverse $\Upsilon_1^{-1}(t)$ exists at least on some vicinity of the origin by continuity of the solution. 

Let us show that $\log \det \big(\Phi_1(t)\big)$ remains finite on $[0,T]$. To this end, the first term on the right in \eqref{logdet} 
satisfies:
\begin{align}
&
\label{term1}
\log \det \big(a^+_T \Upsilon_1(t)\big)   = 
\log  (a^+_T)^2 +  \log \det\Upsilon_1(0) + \gamma_T \int_0^t  \trace A(s) ds  = \\
&
\nonumber
\log  \Big(\frac {|\theta|} 2 + \gamma_T\Big)^2 -  \log (2\gamma_T)^2 + 2t\gamma_T   =
-\log 4 + 2\log  \Big(1+\frac {|\theta|} 2 \frac 1 {\gamma_T}\Big)    + 2t\gamma_T,
\end{align}
where we used equality $\trace A(t)=2$. Since the last two terms are positive this implies 
$$
\log \det \big(a^+_T \Upsilon_1(t)\big) \ge -\log 4, \quad t\in [0,T].
$$
To bound the second term in \eqref{logdet}, note that 
\begin{equation}\label{bndaa}
\left|{a^-_T}/{a^+_T}\right| = 
\frac{   \frac {|\mu| } {2T} 
}
{
\left(\frac {|\theta|} 2 + \sqrt{\left(\frac {\theta }2\right)^2+\frac {\mu } {2T}}\right)^2
}\le \frac{|\mu|}{2\theta^2}\frac 1 T
\end{equation}
and hence it will suffice to show that 
\begin{equation}\label{Upsbnd}
\frac 1 T \sup_{t\le T}\|\Upsilon_1^{-1}(t)\Upsilon_2(t)\|\xrightarrow{T\to\infty}0,
\end{equation}
where $\|\cdot\|$ denotes the matrix norm, induced by Euclidean norm on $\Real^2$.
To prove this limit we will need an estimate for the solution of the second equation in \eqref{Upsilon}.
Define  
$$
g_t :=\frac 1{\sqrt{\psi(t,t)}} = \sqrt{\frac{d\langle M\rangle_t}{dt}}
$$
and  fix an arbitrary vector $v\in \Real^2$, then
$$
v_t  :=  \begin{pmatrix}
g_t & 0 \\
0 & 1/ {g_t}
\end{pmatrix} 
\Upsilon^\top_2(t) v
$$
solves the equation $\dot v_t =  H(t) v_t$  with symmetric matrix
$$
H(t) = \begin{pmatrix}
-\gamma_T +\dot g_t/g_t& -\gamma_T \\
-\gamma_T & -\gamma_T   -\dot g_t/g_t
\end{pmatrix}.
$$
The maximal eigenvalue of this matrix is 
$$
-\gamma_T + \sqrt{\gamma_T^2 + \left(\dot g_t/g_t\right)^2} \le   \frac 1{2\gamma_T}\left(\dot g_t/g_t\right)^2
$$
and thus, under assumption \eqref{ass} and since $g_0=1$ (see Theorem 2.4 (ii) in \cite{CCK})
$$
\|v_t\|_2 \le \frac {\|v\|_2 } {2\gamma_T} \exp \left(\frac 1{2\gamma_T}\int_0^t \left(\dot g_s/g_s\right)^2ds\right) \le  C \|v\|_2, \quad t>0
$$
with the same constant $C$ for all $T$ large enough.  Hence 
$$
\|\Upsilon_2(t)\|  \le C \max (g_t,1/g_t).
$$

Further, note that
$$
\frac{d}{dt} \Upsilon_1^{-1}(t) =   -\gamma_T A(t) \Upsilon_1^{-1}(t)
$$
which under transposition  and multiplication by $J$ from the right  becomes
$$
\frac{d}{dt}  \big(\Upsilon_1^{-\top}(t)J\big)  = 
-\gamma_T  \big(\Upsilon_1^{-\top}(t) J\big)A(t),
$$
that is, $\Upsilon_1^{-\top}(t)J$ and $\Upsilon_2(t)$ solve the same equation.
Therefore
\begin{equation}
\label{Ups}
\|\Upsilon_1^{-1}(t)\Upsilon_2(t)\|\le \|\Upsilon_1^{-1}(t)J\| \|\Upsilon_2(t)\| \le C^2 \max \big(g^2_t, 1/g^2_t\big),
\end{equation}
and \eqref{Upsbnd} holds by continuity of $g_t$ and assumption \eqref{check2}. This shows that for any fixed $\mu\in \Real$, function 
$\Gamma(t)=\Phi^{-1}_1(t)\Phi_2(t)$ solves \eqref{Riceq} on $[0,T]$ for all sufficiently large $T$. 

\medskip
\noindent 
(b) For a fixed $\mu\in \Real$, let $T$ be large enough, so that Riccati equation \eqref{Riceq} has unique solution on $[0,T]$ 
and the Laplace transform satisfies \eqref{Laplace}. Multiplying the first equation in \eqref{PhiPhi} by $\Phi^{-1}(t)$ gives
$$
\Phi^{-1}_1(t)\dot \Phi_1(t) = -\frac {\theta} 2   A(t) + \frac{\mu}{2T}\Gamma(t) R(t),
$$
and since $\trace A(t) =2$ 
$$
\frac \mu {2T} \trace\big(\Gamma(t) R(t)\big) = \trace\big(\Phi^{-1}_1(t)\dot \Phi_1(t)\big) +   \theta.
$$
By the Liouville formula $\trace\big(\Phi_1^{-1}(t)\dot \Phi_1(t)\big)=\dfrac d {dt} \log \det \big(\Phi_1(t)\big)$ and hence 
\begin{equation}
\label{Lfla}
\frac \mu {2T} \int_0^T \trace\big(\Gamma(t) R(t)\big) dt =   \log \det \big(\Phi_1(T)\big)  +   \theta T.
\end{equation}
Since 
$$
\gamma_T=\sqrt{\Big(\frac\theta 2\Big)^2+\frac \mu {2T}}= \frac{|\theta|}{2}+ \frac {\mu}{2|\theta|}\frac 1{T} + O(T^{-2}),
$$
by \eqref{term1} we have  
\begin{align*}
&
\log \det \big(a^+_T \Upsilon_1(T)\big) + \theta T  = 
-\log 4 + 2\log  \Big(1+\frac {|\theta|} 2 \frac 1 {\gamma_T}\Big)    + 2T\gamma_T  - |\theta| T
\xrightarrow[T\to\infty]{}\frac {\mu }{|\theta|}.
\end{align*}
The claimed limit \eqref{enough} is obtained by plugging this, \eqref{bndaa} and \eqref{Upsbnd} and \eqref{logdet} into \eqref{Lfla} and \eqref{Laplace}. 

\medskip 
\noindent 
(c) Riccati equation \eqref{Riceq} is well known to have unique continuous solution for any $\mu>0$ on any interval $[0,T]$.
Hence by \eqref{term1}
\begin{align*}
&
\log \det \big(a^+_T \Upsilon_1(T)\big) +\theta T  = \\
&
-\log 4 + 2\log  \Big(1+\frac {|\theta|} 2 \frac 1 {\gamma_T}\Big)      +  T\left(\sqrt{\theta^2+\frac{2\mu }{T}}  -\sqrt{\theta^2}\right) \ge \\
&
-  \log 4 + \frac T 2\int_{0}^{\frac{2\mu }{T}} \frac 1{\sqrt{\theta^2+x}}dx \ge 
-  \log 4 +      \frac {\mu }{2\sqrt{(\frac\theta 2)^2+\frac{\mu }{2T}}}\ge 
-  \log 4 +      \frac \mu {\sqrt{\theta^2+\mu}}
\end{align*}
where the last bound holds for all $T\ge 2$. The convergence in \eqref{Upsbnd} is uniform over $\mu \ge 0$, since constant $C$ in \eqref{Ups}
can be chosen independently of $\mu$ in this case.  
Hence in view of \eqref{bndaa}, the second term in \eqref{logdet} is bounded by $\log (c\mu^2)$ with a constant $c$, independent of $T$ and $\mu$. The bound \eqref{muexpbnd} now follows from the formulas  \eqref{Lfla} and \eqref{Laplace}. 

\end{proof}

It is left to check the conditions of Lemma \ref{lem-cond}, which we do separately for $H$ less and greater than $1/2$.
Below the brief notation $f_T \sim g_T$ is used, whenever $f_T = C g_T (1+o(1))$ as $T\to\infty$ with a nonzero constant $C$. 

\begin{lem}
For $H>\frac 1 2$
$$
\frac d{dT} \langle M\rangle_T  \sim T^{1-2H} \quad \text{and}\quad
\left(\frac d {dT}\log  \frac d{dT} \langle M\rangle_T\right)^2 \sim T^{-2},
\quad \text{as\ \ } T\to\infty
$$
and thus the conditions of Lemma \ref{lem-cond} hold.
\end{lem}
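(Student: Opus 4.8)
Because $H>\frac12$, $H|s-r|^{2H-1}\sign(s-r)=(s-r)|s-r|^{2H-2}$ is differentiable in $s$ for $s\ne r$ with locally integrable derivative $H(2H-1)|s-r|^{2H-2}$, so \eqref{WHeq} is equivalent to the weakly singular Fredholm equation of the second kind
$$
g(s,t)+H(2H-1)\int_0^t g(r,t)\,|s-r|^{2H-2}\,dr=1,\qquad 0<s<t\le T .
$$
Let $\mathcal A$ be the operator on $L^2(0,1)$ with the Riesz kernel $H(2H-1)|x-y|^{2H-2}$; since $2H-2\in(-1,0)$ it is bounded, compact, self-adjoint and nonnegative. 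The plan is to first exploit the exact scaling of the displayed equation: a direct substitution shows $g(s,t)=\phi_t(s/t)$ with $\phi_t:=(I+t^{2H-1}\mathcal A)^{-1}\mathbf 1\in L^2(0,1)$, and therefore
$$
\langle M\rangle_t=\int_0^t g(s,t)\,ds=t\,F\!\big(t^{2H-1}\big),\qquad F(\Lambda):=\big\langle(I+\Lambda\mathcal A)^{-1}\mathbf 1,\mathbf 1\big\rangle_{L^2(0,1)} .
$$
As $\mathcal A$ is bounded and nonnegative, $F$ is real-analytic on $(-1/\|\mathcal A\|,\infty)$, so $\langle M\rangle_\cdot\in C^\infty(0,\infty)$; combined with $\langle M\rangle'_t\to1$ as $t\to0$ this supplies the regularity required in Lemma \ref{lem-cond}.

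Second, I would record the spectral data of $\mathcal A$. The Riesz kernel is strictly positive definite (its Fourier transform is a positive multiple of $|\xi|^{1-2H}$), so $\mathcal A$ is injective, with eigenvalues $\lambda_n\downarrow0$, $\lambda_n>0$, and an orthonormal eigenbasis $(e_n)$; set $c_n:=|\langle\mathbf 1,e_n\rangle|^2$, so $\sum_n c_n=1$. The one genuinely $H$-dependent ingredient is the summability
$$
\sum_n\frac{c_n}{\lambda_n}=\big\langle\mathcal A^{-1}\mathbf 1,\mathbf 1\big\rangle=:\kappa_H<\infty ,
$$
which I would obtain from the classical solution of the weakly singular first-kind equation $H(2H-1)\int_0^1 f(y)|x-y|^{2H-2}\,dy=1$ on $(0,1)$, namely $f(x)=c_H\big(x(1-x)\big)^{1/2-H}$: since $2(H-\tfrac12)<1$ we have $f\in L^2(0,1)$, hence $\mathcal A^{-1}\mathbf 1=f$ and $\kappa_H=\langle f,\mathbf 1\rangle=\int_0^1 f=c_H\,B(\tfrac32-H,\tfrac32-H)\in(0,\infty)$, with $B$ the Beta function.

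Third comes the asymptotics, which reduces to dominated convergence for spectral sums. Writing $\Lambda=t^{2H-1}$, so $\dot\Lambda=(2H-1)\Lambda/t$, differentiation of $\langle M\rangle_t=tF(\Lambda)$ together with $F(\Lambda)=\sum_n c_n/(1+\Lambda\lambda_n)$ gives
$$
\Lambda\,\frac{d}{dt}\langle M\rangle_t=\Lambda\,F(\Lambda)+(2H-1)\Lambda^2F'(\Lambda)=\sum_n\frac{c_n}{\lambda_n}\,h(\Lambda\lambda_n),\qquad h(u):=\frac{u\big(1+(2-2H)u\big)}{(1+u)^2}.
$$
Since $0\le h\le C$ and $h(u)\to2-2H$ as $u\to\infty$, dominated convergence with the summable dominant $C c_n/\lambda_n$ yields $\Lambda\frac{d}{dt}\langle M\rangle_t\to(2-2H)\kappa_H$, that is $\frac{d}{dt}\langle M\rangle_t\sim(2-2H)\kappa_H\,t^{1-2H}$; the same bookkeeping for the second derivative gives $t^{2H}\frac{d^2}{dt^2}\langle M\rangle_t\to-(2H-1)(2-2H)\kappa_H$. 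Consequently $\frac{d}{dt}\log\frac{d}{dt}\langle M\rangle_t=\langle M\rangle''_t/\langle M\rangle'_t\sim(1-2H)/t$, whose square is $\sim(2H-1)^2t^{-2}$ and hence integrable on $[1,\infty)$, so \eqref{ass} holds; and $\frac1t\max\!\big(\tfrac{dt}{d\langle M\rangle_t},\tfrac{d\langle M\rangle_t}{dt}\big)\sim\max\!\big(\tfrac{t^{2H-2}}{(2-2H)\kappa_H},(2-2H)\kappa_H\,t^{-2H}\big)\to0$ because $\tfrac12<H<1$, so \eqref{check2} holds. Lemma \ref{lem-cond} then gives the claim.

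The decisive step is the summability $\kappa_H<\infty$ (equivalently $\mathbf 1\in\mathrm{Ran}(\mathcal A^{1/2})$): it rests on the endpoint behaviour of the solution of the Riesz-potential equation on $[0,1]$, and it is the only point at which the power structure $v(t)=t^{2H}$ enters in an essential way, so it does not transfer verbatim to a general $v$ in \eqref{WHeqv}. Everything else, namely the scaling reduction and the dominated-convergence estimates on the spectral sums, is robust; in particular one never needs the precise eigenvalue asymptotics of $\mathcal A$, only $\sum_n c_n/\lambda_n<\infty$.
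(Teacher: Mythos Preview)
Your argument is correct and follows a genuinely different, more elementary route than the paper's. The paper works with the pointwise identity $\frac{d}{dT}\langle M\rangle_T = g^2(T,T)$ (Theorem~2.4 of \cite{CCK16}), rescales to the singularly perturbed equation $\eps u_\eps + \mathcal A u_\eps = \mathbf 1$ with $\eps=T^{1-2H}$, and then extracts the boundary value $u_\eps(1)$ via the precise eigenproblem asymptotics of \cite{CK}: $\lambda_n\sim C_1 n^{1-2H}$, $\langle 1,\varphi_{2n+1}\rangle\sim C_2 n^{-\frac12-H}$, $\langle(\cdot)^{-\beta},\varphi_n\rangle\sim C_3 n^{\beta-1}$. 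You instead use the integrated identity $\langle M\rangle_t=\int_0^t g(s,t)\,ds$, turn it into $tF(t^{2H-1})$ by scaling, and reduce the whole computation to dominated convergence on spectral sums, needing only the single summability $\sum_n c_n/\lambda_n<\infty$. That summability follows from the explicit solution $f(x)=c_H(x(1-x))^{1/2-H}$ of the first-kind Riesz equation lying in $L^2(0,1)$, a classical fact that does not require any eigenvalue asymptotics. What your approach buys is that it never appeals to the results of \cite{CK}; what the paper's approach buys is a direct handle on the boundary value $g(T,T)$ itself, together with a singular-perturbation framework that carries over uniformly to the case $H<\tfrac12$, where the relevant operator is no longer compact and your dominated-convergence device in its present form does not immediately apply.
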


\begin{proof}
For  $H>\frac 1 2$ the derivative and integration in \eqref{WHeq} can be interchanged and it takes the form of 
integral equation 
$$
g(s,t) +   \int_0^t g(r,t) c_H |s-r|^{2H-2}dr = 1, \quad 0<s<t \le T,
$$ 
where $c_H=H(2H-1)$. 
By Theorem 2.4, \cite{CCK} in this case 
$$
\langle M\rangle_T = \int_0^T g^2(t,t)dt,
$$
with $g(t,t)> 0$ for all $t\ge 0$. 

Define small parameter $\eps := T^{1-2H}$, then the function
$u_\eps(x):=T^{2H-1}g(xT,T)$ solves the integral equation 
\begin{equation}\label{WHeqmf.B.m.}
\eps u_\eps(x) + \int_0^1 c_H|y-x|^{2H-2}u_\eps(y) dy =1, \quad x\in [0,1]
\end{equation}
and, moreover,
\begin{equation}\label{MTeps}
\frac {d \langle M \rangle_T}{dT}     = g^2(T,T)= \eps^2 u^2_\eps (1).
\end{equation}
For any $\eps>0$ the equation of the second kind \eqref{WHeqmf.B.m.} has a unique solution, continuous on the closed 
interval $[0,1]$ (see e.g. \cite{VP80}). For $\eps=0$ it degenerates to the equation of the first kind, whose unique 
solution is known in a closed form \cite{LB98}:
$$
u_0(x) = a_H x^{\frac 1 2-H}(1-x)^{\frac 1 2 -H}, \quad x\in (0,1)
$$
where $a_H$ is an explicit constant. Note that $u_0(x)$ explodes at the endpoints of the interval and therefore 
it is reasonable to expect that $u_\eps(1)\to\infty$ as $\eps\to 0$. 

To estimate the growth of $u_\eps(1)$ we will use the following asymptotic approximations for the ordered sequence 
of eigenvalues  
and scalar products with the corresponding eigenfunctions for the integral operator in \eqref{WHeqmf.B.m.}
(see Theorem 2.3 and Lemma 6.9 in \cite{ChK}): 
\begin{equation}\label{Hge}
\begin{aligned}
\lambda_n & = c_1  n^{1-2H} (1+o(1))\\
\langle (\cdot)^{-\beta} , \varphi_n\rangle & = c_2   n^{\beta-1} (1+o(1)) \\
\langle 1, \varphi_{2n-1}\rangle &   = c_3 n^{-\frac 1 2-H}  (1+o(1))
\end{aligned}
\qquad n\to\infty
\end{equation}
where $c_j$'s are positive constants and $\beta\in (0,1)$. The eigenfunctions with even indices are antisymmetric 
around the midpoint of the interval and hence $\langle 1, \varphi_{2n}\rangle=0$. 

Taking  scalar product of both sides of \eqref{WHeqmf.B.m.} gives
\begin{equation}
\label{equeps}
\begin{aligned}
 u_\eps(1) =&
\;
\eps^{-1} \int_0^1 \big(u_0(x)-u_\eps(x)\big) c_H (1-x)^{2H-2}dx = \\
&
\sum_{n\; \text{odd}}\langle 1, \varphi_n\rangle
 \frac{1}{\lambda_n(\eps+\lambda_n)}
 \int_0^1 \varphi_n(x) c_H (1-x)^{2H-2}dx.  
\end{aligned}
\end{equation}
The estimates in \eqref{Hge} imply that this series converges for all $\eps>0$ and diverges to $+\infty$ as $\eps\to 0$.
Contribution of any finite number of summands is bounded as $\eps\to 0$ and therefore can be neglected. Consequently,   
the limiting behavior of the series in \eqref{equeps} does not change, if all the sequences are replaced by their leading 
asymptotic terms from \eqref{Hge}:
\begin{equation}
\label{ueps1}
u_\eps(1) =  C \sum_{n=1}^\infty 
 \frac{ n^{-\frac 1 2-H} }{ \eps+n^{1-2H} } \big(1+o(1)\big)\quad \text{as}\ \eps\to 0.
\end{equation}
where $C>0$ absorbs all the constants.    
This series can be estimated by an integral:
\begin{align*}
&
\sum_{n=1}^\infty\frac{n^{-\frac 1 2-H}}{ \eps+  n^{1-2H} } \le 1 +
\int_{2}^\infty \frac{(x-1)^{-\frac 1 2-H}}{ \eps+  x^{1-2H} }dx= \\
&
1 +
\eps^{-\frac 1  2}\int_{2\eps^{\frac 1{2H-1}}}^\infty \frac{  y^{2H-1}(y-\eps^{ \frac 1{2H-1}})^{-\frac 1 2-H}}{   y^{2H-1} +1}dy
= 
\eps^{-\frac 1  2}\int_{0}^\infty \frac{  y^{H-\frac 3 2}}{   y^{2H-1} +1}dy (1+o(1)).
\end{align*}
Analogous calculation yields the same lower bound and in view of \eqref{MTeps} and \eqref{ueps1}
we obtain the claimed asymptotics: 
\begin{equation}\label{ueps}
\frac {d \langle M \rangle_T}{dT}    = \eps^2 u^2_\eps (1) \sim \eps = T^{1-2H}.
\end{equation}

The second condition is verified similarly: 
\begin{align*}
&
\frac{d}{d\eps} u_\eps (1) =
\frac{d}{d\eps}\sum_{n\; \text{odd}}\frac{\langle \varphi_n, 1\rangle}{\lambda_n(\eps+\lambda_n)}\int_0^1 \varphi_n(x) c_H |x-1|^{2H-2}dx \sim \\
&
- \sum_{n\; \text{odd}}\frac{n^{-\frac 1 2-H}}{ (\eps+  n^{1-2H})^2}
\sim
-    \int_1^\infty \frac{x^{-\frac 1 2-H}}{ (\eps+  x^{1-2H})^2}dx\sim
-  \eps^{-\frac 3 2}  \int_0^\infty \frac{y^{3H-\frac 5 2}}{ (  y^{2H-1}+1  )^2}dy,
\end{align*}
and hence
\begin{multline*}
\left(\frac d {dT}\log  \frac d{dT} \langle M\rangle_T\right)^2 = \left(\frac d {dT}\log g(T,T) \right)^2= \left(\frac d{dT} \log T^{1-2H} u_\eps(1)\right)^2\le \\
\frac 2 {T^2} +2\left(\frac {d\eps}{dT}\frac{d}{d\eps} \log  u_\eps(1)\right)^2 =
 \frac 2 {T^2} +2\left(\frac {d T^{1-2H}}{dT} \frac{  \frac d{d\eps}u_\eps(1) }{u_\eps(1)}\right)^2
\sim \frac 1{T^2}.
\end{multline*}

\end{proof}

\begin{lem}
For $H<\frac12$,
$$
\frac d{dT} \langle M\rangle_T  \sim \; \text{const.} \quad \text{and}\quad
\left(\frac d {dT}\log  \frac d{dT} \langle M\rangle_T\right)^2 \sim T^{-2}
\quad \text{as\ \ } T\to\infty
$$
and thus the conditions of Lemma \ref{lem-cond} hold.
\end{lem}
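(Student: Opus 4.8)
The plan is to mirror the treatment of the case $H>\frac12$, now working with the genuinely integro-differential form of \eqref{WHeq}, since for $H<\frac12$ the differentiation cannot be carried inside the integral. By \cite{CCK16} the bracket $\langle M\rangle_t$ is twice continuously differentiable and $\frac{d}{dT}\langle M\rangle_T=g^2(T,T)$ (the same identity as in the previous lemma), so it suffices to control $g(T,T)$ and its derivative in $T$. Introduce the small parameter $\eps:=T^{2H-1}$ (note $\eps\to0$, since $2H-1<0$) and rescale $u_\eps(x):=g(xT,T)$, $x\in[0,1]$. A change of variables turns \eqref{WHeq} into the singularly perturbed equation
$$
u_\eps(x)+\eps\,(\mathcal{A}u_\eps)(x)=1,\qquad x\in(0,1),\qquad
(\mathcal{A}f)(x):=\frac{d}{dx}\int_0^1 f(y)\,H|x-y|^{2H-1}\sign(x-y)\,dy,
$$
whose degenerate $(\eps=0)$ solution is $u_0\equiv1$, and one has $\frac{d}{dT}\langle M\rangle_T=u_\eps^2(1)$.

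Because $\mathcal{A}$ has positive order $1-2H$, the solution $u_\eps$ develops boundary layers of width $\sim\eps^{1/(1-2H)}$ near the endpoints, so $u_\eps(1)$ cannot be obtained by simply setting $\eps=0$. I would proceed spectrally, exactly as in the previous lemma. Let $\mathcal{L}:=\mathcal{A}^{-1}$, a compact self-adjoint weakly singular integral operator with eigenpairs $(\nu_n,\psi_n)$, $\nu_n\downarrow0$. Applying $\mathcal{L}$ to the rescaled equation gives $\eps\,u_\eps=\mathcal{L}(1-u_\eps)$; expanding $1-u_\eps$ in $\{\psi_n\}$ and evaluating at $x=1$ yields
$$
u_\eps(1)=1-\eps\sum_n\frac{\langle1,\psi_n\rangle\,\psi_n(1)}{\nu_n+\eps}.
$$
Inserting the spectral asymptotics of $\mathcal{A}$ from \cite{CK} — the $H<\frac12$ counterpart of \eqref{Hge}, i.e. $\nu_n\sim c_1 n^{-(1-2H)}$, $\langle1,\psi_n\rangle\sim c_2 n^{-\frac12-H}$ for $n$ of the relevant parity, together with the endpoint behaviour of $\psi_n(1)$ — one replaces the sum by an integral, rescales the summation index by a power of $\eps$, and finds that the correction term is $O(\eps^{\delta})$ for some $\delta>0$. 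Hence $u_\eps(1)\to1$, so $\frac{d}{dT}\langle M\rangle_T=u_\eps^2(1)$ tends to the positive constant $1$; in particular $\frac{d\langle M\rangle_t}{dt}$ is bounded and bounded away from $0$, which gives \eqref{check2}.

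For the second assertion I would differentiate the series for $u_\eps(1)$ termwise — justified by the same eigenvalue estimates — obtaining $\partial_\eps u_\eps(1)=O(\eps^{\delta-1})$, and combine it with $\frac{d\eps}{dT}=(2H-1)T^{2H-2}$ and $u_\eps(1)\to1$ to get
$$
\frac{d}{dT}\log\frac{d}{dT}\langle M\rangle_T
=\frac{2}{u_\eps(1)}\,\partial_\eps u_\eps(1)\,\frac{d\eps}{dT}
=O\bigl(T^{(2H-1)\delta-1}\bigr),
$$
so that $\bigl(\frac{d}{dt}\log\frac{d}{dt}\langle M\rangle_t\bigr)^2=O(t^{-2})$, which gives the integrability required in \eqref{ass}. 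Together with the twice differentiability of $\langle M\rangle_t$, all hypotheses of Lemma \ref{lem-cond} are verified; combined with the case $H>\frac12$, this completes the proof of Theorem \ref{thm}.

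The step I expect to be the main obstacle is the boundary-layer analysis of $u_\eps$ at $x=1$ — equivalently, the uniform-in-$\eps$ evaluation of the eigenfunction series above. This relies on the fine spectral asymptotics of the integro-differential operator $\mathcal{A}$ on $[0,1]$, in particular the precise decay of $\langle1,\psi_n\rangle$ and of the endpoint values $\psi_n(1)$, which is exactly the input supplied by \cite{CK} and the point where the specific structure $v(t)=t^{2H}$ of the driving fractional noise enters in an essential way.
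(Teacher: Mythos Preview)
Your overall strategy matches the paper's, but two concrete ingredients are wrong and the plan would not go through as written.

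First, the identity $\tfrac{d}{dT}\langle M\rangle_T=g^2(T,T)$ that you carry over from the previous lemma is specific to $H>\tfrac12$. For $H<\tfrac12$, Theorem~2.4 of \cite{CCK16} gives instead $\sqrt{\tfrac{d}{dt}\langle M\rangle_t}=p(t,t)$ with $p$ defined through the fractional operator $\Psi$, not through $g$ itself. Combining this with the alternative form \eqref{fraceq} of the Wiener--Hopf equation, the paper rewrites
\[
p(T,T)=C\,\eps^{-1/2}\int_0^1\bigl(1-u_\eps(x)\bigr)\,x^{\frac12-H}(1-x)^{-\frac12-H}\,dx,
\]
so that after eigenfunction expansion the series involves the Fourier coefficients $\langle h,\varphi_n\rangle$ with $h(x)=x^{\frac12-H}(1-x)^{-\frac12-H}$, \emph{not} the endpoint values $\varphi_n(1)$. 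This integral representation is precisely how the boundary-layer obstacle you correctly flag is sidestepped: the needed asymptotics $\langle h,\varphi_n\rangle\sim n^{H-\frac12}$ is supplied by the cited spectral results, whereas sharp control of $\varphi_n(1)$ is not available.

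Second, the spectral input you import from \eqref{Hge} does not persist for $H<\tfrac12$: the averages decay as $\langle 1,\varphi_n\rangle\sim C\,n^{-1}$, not $n^{-\frac12-H}$. With the correct coefficients and the integral representation above one gets
\[
p(T,T)=\eps^{1/2}\sum_{n\ \text{odd}}\frac{\langle 1,\varphi_n\rangle\langle h,\varphi_n\rangle}{\eps+\lambda_n^{-1}}\sim \eps^{1/2}\cdot \eps^{-1/2}\cdot\mathrm{const},
\]
so $p(T,T)$ tends to a finite nonzero constant (there is no reason it should be $1$, and your series for $u_\eps(1)$ with the incorrect exponents would not even give the right scale). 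The termwise differentiation step for the second assertion is fine in principle and delivers the $T^{-2}$ bound once the correct representation and asymptotics are in place.
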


\begin{proof}
For   $H<\frac  12$ the equation \eqref{WHeq} takes the form (see Theorem 5.1 in \cite{CCK}):
\begin{equation}
\label{fraceq}
c_H (\Phi g)(s) + \frac{2-2H}{\lambda_{H}}(\Psi g)(s,t)s^{1-2H}=c_H(\Phi 1)(s), \quad s\in (0,t],
\end{equation}
where 
\begin{align*}
(\Psi g)(s,t) &= -2H\frac{d}{ds}\int_s^t g(r,t)
r^{H-\frac 12}(r-s)^{H-\frac 12}\,dr \\
(\Phi f)(s)  &= \frac{d}{ds}\int_0^s f(r)r^{\frac 12-H} (s-r)^{\frac 12-H}\,dr.
\end{align*}
Moreover, by Theorem 2.4 in \cite{CCK},
$$
\sqrt{\frac d{dt} \langle M\rangle_t} = \sqrt{\frac{2-2H}{\lambda_H}} t^{\frac 1 2-H}(\Psi g)(t,t)  =: p(t,t),
$$
and it follows from  \eqref{fraceq} that
\begin{align*}
&
p(t,t) =  \sqrt{\frac{\lambda_H}{2-2H}}t^{H-\frac 1 2}c_H \Big((\Phi 1)(t)-(\Phi g)(t)\Big) =\\
&
c_H\sqrt{\frac{\lambda_H}{2-2H}} (\tfrac 1 2-H )
t^{H-\frac 1 2} \int_0^t \big(1-g(r,t)\big)r^{\frac 1 2-H}(t-r)^{-\frac 1 2-H}dr.
\end{align*}
Let $\eps:=T^{2H-1}$ and define $u_\eps(u):= g(uT,T)$, $u\in [0,1]$, then
$$
p(T,T) = C \eps^{-\frac 1 2}
\int_0^1 \big(1-u_\eps(x)\big)u^{\frac 1 2-H}(1-x)^{-\frac 1 2-H}dx,
$$
with a constant $C>0$. The function  $u_\eps$ solves the equation
\begin{equation}\label{WHeqHless}
\eps   u_\eps + K_H^{-1} u_\eps = K_H^{-1} 1,
\end{equation}
where $K_H$ stands for the operator in \eqref{WHeq}. For $H<\frac 1 2$ the inverse $K_H^{-1}$ turns out to be an integral operator 
with a certain  weakly singular kernel (see (iv) of Theorem 5.1 in \cite{CCK}). 

The limit equation is uniquely solved by $u_0\equiv 1$ and hence
$$
p(T,T) = C  \eps^{-\frac 1 2}
\int_0^1 \big(u_0(x)-u_\eps(x)\big)x^{\frac 1 2-H}(1-x)^{-\frac 1 2-H}dx.
$$
Since $u_0, u_\eps\in L^2(0,1)$,
\begin{align*}
u_0 -u_\eps = & \sum_n \langle 1,\varphi_n\rangle \varphi_n -
\sum_n \frac{\langle K_H^{-1} 1,\varphi_n\rangle}{\eps+\lambda_n^{-1}}\varphi_n \\
=&  \sum_n \langle 1,\varphi_n\rangle \varphi_n -
\sum_n \frac{\lambda_n^{-1}\langle  1,\varphi_n\rangle}{\eps+\lambda_n^{-1}}\varphi_n=
\sum_n \frac{\eps}{\eps + \lambda_n^{-1}}\langle 1,\varphi_n \rangle\varphi_n.
\end{align*}
Define $h(u):=C u^{\frac 1 2-H}(1-u)^{-\frac 1 2-H}$, then
\begin{equation}
\label{pTT}
p(T,T)=   \eps^{\frac 1 2}
\sum_n  \frac{1 }{\eps+\lambda_n^{-1}} \langle 1,\varphi_n\rangle\langle h, \varphi_n\rangle.
\end{equation}
By Theorem 2.3, \cite{ChK}, for $H<\frac 1 2$, the eigenvalues satisfy the same asymptotics as in \eqref{Hge} and 
therefore form an increasing sequence, in agreement with the fact that in this case the operator $K_H$ is not compact. 
Also we have  $\langle h, \varphi_n\rangle   \sim  n^{ H-\frac 1 2}$ (Lemma 6.9 in \cite{ChK}).
By Theorem 2.3 in \cite{ChK} the averages of symmetric eigenfunctions
have asymptotics $\langle 1, \varphi_n\rangle\sim  n^{-1}$, c.f. \eqref{Hge}. 
 
Now we can estimate the growth rate of the series from \eqref{pTT}:
\begin{align*}
&
r(\eps)=\sum_{n\; \text{odd}}  \frac{1 }{\eps+\lambda_n^{-1}} \langle 1,\varphi_n\rangle\langle h, \varphi_n\rangle \sim
\sum_{n\; \text{odd}}  \frac{n^{-\frac 3 2+H} }{\eps+  n^{2H-1}}  \sim \eps^{-\frac 1 2} \int_0^\infty \frac{y^{-\frac 1 2 -H}}{y^{ 1-2H}+1}dy
\end{align*}
and hence
$$
\frac{d}{dT}\langle M\rangle_T =  p^2(T,T) \sim \text{const.}, \quad T\to\infty.
$$

Further, differentiating the series in \eqref{pTT}, we get
\begin{align*}
\frac{d}{d\eps}r(\eps)=
&
\frac{d}{d\eps}\sum_n  \frac{1 }{\eps+\lambda_n^{-1}} \langle 1,\varphi_n\rangle\langle h, \varphi_n\rangle=
\\
&
-\sum_n  \frac{n^{-\frac 3 2+H} }{(\eps+ n^{2H-1})^2} \sim
- \eps^{-\frac 3 2 }   \int_{0}^\infty\frac{y^{ \frac 1 2-3H} }{(y^{1-2H}+1)^2}d y,
\end{align*}
and
$$
\left(\frac d{dT} \log \frac{d}{dT} \langle M\rangle_T\right)^2=
\left(\frac {d}{dT}\log p(T,T)\right)^2\sim
\left(\frac {d\eps}{dT}\frac{d}{d\eps} \log \eps^{\frac 12}r(\eps)\right)^2\sim T^{-2},
$$
which proves  \eqref{ass}.
\end{proof}

\clearpage 

\subsection{Convergence of moments}\label{sec:2.2}
The convergence of moments in \eqref{LAN} 
\begin{equation}
\label{convm}
\E_\theta \left(\sqrt{T} (\widehat \theta_T-\theta)\right)^p
\xrightarrow[T\to\infty]{} \E \big(\sqrt{2|\theta|}Z\big)^p, \qquad \forall p>0,
\end{equation}
with $Z\sim N(0,1)$ holds, if $\big(\sqrt{T}(\widehat \theta_T -\theta)\big)^p$ is uniformly integrable over $T$ for all $p>0$. 
Note that  
\begin{align*}
\big(\E\big|\sqrt{T}(\widehat \theta_T -\theta)\big|^p\big)^2  &
\le
\E \left|\frac 1T \int_0^T Q_t(X)^2 d\langle M\rangle_t\right|^{-2p}
\E\left|\frac 1 {\sqrt{T}}\int_0^T Q_t(X)dM_t \right|^{2p} 
  \\
&
\le
\E \left|\frac 1 T\int_0^T Q_t(X)^2 d\langle M\rangle_t\right|^{-2p}
C_p \E\left|\frac 1 T\int_0^T Q_t(X)^2d\langle M\rangle_t \right|^{p} 
\end{align*}
where the last bound holds by the Burkholder-Davis-Gundy inequality with an absolute constant $C_p$. 
Hence \eqref{convm} holds by the de la Vall\'{e}e-Poussin theorem  if we prove that
$$
\varlimsup_{T\to \infty} \E\left|\frac 1 T\int_0^T Q_t(X)^2d\langle M\rangle_t \right|^{p} <\infty, \quad \forall p\in \mathbb{Z}.
$$
This limit is finite for $p>0$, since convergence of the Laplace transform in \eqref{enough} holds for any $\mu\in \Real$, including 
negative values. 
For $p<0$ it is finite due to bound \eqref{lem-cond:c} of Lemma \ref{lem-cond} and the 
identity 
$$
\E \left|\frac 1 T\int_0^T Q_t(X)^2d\langle M\rangle_t \right|^{- p} = \frac 1 {p!} \int_0^\infty \mu^p L_T(\mu)d\mu, \quad p\in \mathbb{N}. 
$$

\section{A concluding remark}

In the simpler regression problem
$$
X_t = \theta t + V_t,\quad t\in [0,T]
$$
the m.l.e. of $\theta\in \Real$ is given by
$$
\widehat \theta_T(X) = \frac{\int_0^T g(t,T) dX_t}{\langle M\rangle_T}.
$$
Consequently the estimation error is normal with zero mean and its variance is controlled by the growth rate of $\langle M \rangle_T$, rather than the derivative $d\langle M \rangle_T/dT$ as in the Ornstein-Uhlenbeck problem. 
Finding asymptotics of the bracket $\langle M \rangle_T$ amounts to singular perturbation analysis of the equations \eqref{WHeqmf.B.m.} and \eqref{WHeqHless} with respect to weak convergence (cf. \eqref{ueps}), which can be carried out
either directly (see the discussion concluding Section 7.1 in \cite{ChK}) or 
using the spectral asymptotics as above.

The corresponding limit variance is
$$
\E  (\widehat  \theta_T-\theta)^2 \simeq \begin{cases}
v_H T^{2H-2} & H>\frac 1 2\\
T^{-1} & H< \frac 1 2
\end{cases}\quad \text{with}\quad v_H= \frac{2H \Gamma(H+\frac 1 2)\Gamma(3-2H)}{\Gamma(\frac  3 2-H)}
$$
and it follows that the asymptotic is dominated by the fractional component for $H>\frac 1 2$ and by the standard Brownian component for $H<\frac 1 2$.


\def\cprime{$'$} \def\cprime{$'$} \def\cydot{\leavevmode\raise.4ex\hbox{.}}
  \def\cprime{$'$} \def\cprime{$'$} \def\cprime{$'$}

\end{document}